\numberwithin{equation}{section}
\newtheorem{thm}[equation]{Theorem}
\newtheorem{lemma}[equation]{Lemma}
\newtheorem{cor}[equation]{Corollary}
\theoremstyle{remark}
\newtheorem*{remark}{Remark}
\newcommand{\F}{\mathbb{F}}
\renewcommand{\P}{\mathbb{P}}
\begin{document}

\title[Rational points on Fermat curves and surfaces]{Rational points on some Fermat curves and surfaces over finite fields}

\author{Jos\'e Felipe Voloch}
\address{
  Department of Mathematics,
  University of Texas,
  Austin, TX 78712 USA
}
\email{voloch@math.utexas.edu}
\urladdr{http://www.ma.utexas.edu/users/voloch/}

\author{Michael E. Zieve}
\address{
  Department of Mathematics,
  University of Michigan,
  530 Church Street,
  Ann Arbor, MI 48109-1043 USA
}
\email{zieve@umich.edu}
\urladdr{http://www.math.lsa.umich.edu/$\sim$zieve/}



\date{March 9, 2013}

\thanks{The first author was partially supported by the Simons Foundation (grant \#234591). The second author was partially supported by NSF grant DMS-1162181.  The authors thank the referee for spotting a typographical error.}


\begin{abstract}
We give an explicit description of the $\F_{q^i}$-rational points on the Fermat curve $u^{q-1}+v^{q-1}+w^{q-1}=0$,
for $i\in\{1,2,3\}$.  As a consequence, we observe that for any such point $(u,v,w)$, the product $uvw$ is a cube in
$\F_{q^i}$.  We also describe the $\F_{q^2}$-rational points on the Fermat surface $u^{q-1}+v^{q-1}+w^{q-1}+x^{q-1}=0$, and show that the product of the coordinates of any such point is a square.
\end{abstract}

\maketitle


\section{Introduction}

Let $q=p^r$ where $p$ is prime and $r$ is a positive integer.  We will examine the $\F_{q^i}$-rational points on
the Fermat curve $u^{q-1}+v^{q-1}+w^{q-1}=0$, for $i\in\{1,2,3\}$.  Several authors have given estimates
for the number of such points, or even formulas for this number in some cases \cite{DH,GV,HS,Moisio}.
We will go further and explicitly write down all the points.  We find the following unexpected consequence:

\begin{cor} \label{cor}
For $i\in\{1,2,3\}$ and $u,v,w\in\F_{q^i}$, if $u^{q-1}+v^{q-1}+w^{q-1}=0$ then $uvw$ is a cube in\/ $\F_{q^i}$.
\end{cor}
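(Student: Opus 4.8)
The plan is to pass from $(u,v,w)$ to the triple $A=u^{q-1}$, $B=v^{q-1}$, $C=w^{q-1}$, pin down the cube class of $ABC=(uvw)^{q-1}$, and then descend back to $uvw$. First I would clear the easy cases: if $uvw=0$ it is a cube, and if $3\nmid q^i-1$ then cubing is a bijection on the cyclic group $\F_{q^i}^{*}$, so $uvw$ is a cube; hence assume $u,v,w\in\F_{q^i}^{*}$ and $3\mid q^i-1$, which forces $p\neq 3$. Then $A,B,C$ are nonzero, $A+B+C=0$, and each lies in $\mu:=\{x^{q-1}:x\in\F_{q^i}^{*}\}$, the cyclic group of order $(q^i-1)/(q-1)$; the $(q-1)$-power map $\F_{q^i}^{*}\to\mu$ is onto with kernel $\F_q^{*}$. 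The key reduction, used in both nontrivial cases, is: \emph{if $3\mid(q^i-1)/(q-1)$ and $ABC$ is a cube inside $\mu$, then $uvw$ is a cube in $\F_{q^i}^{*}$.} Indeed, writing $ABC=\kappa^{3}$ with $\kappa=\tau^{q-1}\in\mu$, one gets $(uvw)^{q-1}=(\tau^{3})^{q-1}$, so $uvw=\tau^{3}\lambda$ with $\lambda\in\F_q^{*}$, and $\F_q^{*}\subseteq(\F_{q^i}^{*})^{3}$ precisely because $3\mid(q^i-1)/(q-1)$.

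For $i=1$ every nonzero element of $\F_q$ is a $(q-1)$-th power of $1$, so $A=B=C=1$ and $3=0$ in $\F_q$, impossible: there are no points to treat. For $i=2$ we have $\mu=\mu_{q+1}$; with $\xi=A/C$ and $\eta=B/C$ the relation $\xi+\eta=-1$ together with the $q$-power Frobenius (which fixes $-1$ and inverts elements of $\mu_{q+1}$) gives $\xi^{-1}+\eta^{-1}=-1$, whence $\xi\eta=1$ and $\xi^{2}+\xi+1=0$, so $\xi$ has order $3$ (as $p\neq 3$). If $q\equiv1\pmod 3$ then $\mu_{q+1}$ has no element of order $3$, so no such point exists; if $q\equiv2\pmod 3$ then $3\mid q+1=(q^2-1)/(q-1)$ and $ABC=\xi\eta\,C^{3}=C^{3}\in\mu^{3}$, so the key reduction applies and we are done.

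The substance is in $i=3$, where $3\mid q^3-1$ forces $q\equiv1\pmod 3$, hence $3\mid q-1$ and $3\mid q^2+q+1=(q^3-1)/(q-1)$, and $\mu=\mu_{q^2+q+1}=\ker N$ with $N=N_{\F_{q^3}/\F_q}$. Since $ABC=\xi\eta\,C^{3}$ with $\xi=A/C$, $\eta=B/C=-1-\xi$, and $C^{3}\in\mu^{3}$, the key reduction shows it is enough to prove $\xi\eta\in\mu^{3}$. I would invoke Hilbert's Theorem~90 to write $\xi=z^{q-1}$ for some $z\in\F_{q^3}^{*}$. The condition $\eta\in\mu$, i.e.\ $N(1+\xi)=-1$, becomes tractable: $N(1+\xi)=N(z+z^{q})/N(z)$, and the identity $(a+b)(b+c)(c+a)=(a+b+c)(ab+bc+ca)-abc$ applied to $(z,z^{q},z^{q^{2}})$ gives $N(z+z^{q})=\operatorname{Tr}(z)\operatorname{Tr}(z^{q+1})-N(z)$, so $N(1+\xi)=-1$ is equivalent to $\operatorname{Tr}(z)\operatorname{Tr}(z^{q+1})=0$. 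Thus one of the two traces vanishes. If $\operatorname{Tr}(z)=0$ then $z+z^{q}=-z^{q^{2}}$, so $1+\xi=(z+z^{q})/z=-z^{q^{2}-1}$ and $\xi\eta=-\xi(1+\xi)=z^{(q-1)(q+2)}$. If $\operatorname{Tr}(z^{q+1})=0$, then since $\operatorname{Tr}(z^{q+1})=N(z)\operatorname{Tr}(z^{-1})$ the element $w=z^{-1}$ has trace $0$ and $\xi=w^{1-q}$, and the same manipulation yields $\xi\eta=w^{(q-1)^{2}}$. In either case the exponent is divisible by $3(q-1)$ — because $3\mid q+2$ and $3\mid q-1$ — so $\xi\eta$ is the cube of $z^{(q-1)(q+2)/3}$, respectively $w^{(q-1)^{2}/3}$, and these bases lie in $\mu$ (their $(q^2+q+1)$-th powers are $1$). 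Hence $\xi\eta\in\mu^{3}$, and the claim follows.

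The step I expect to be the main obstacle is precisely the $i=3$ analysis: showing that $\eta\in\mu$ collapses to the single polynomial condition $\operatorname{Tr}(z)\operatorname{Tr}(z^{q+1})=0$, and then that on each of the two resulting loci the a priori complicated element $\xi\eta$ simplifies to a monomial whose exponent is divisible by $3(q-1)$ — it is this divisibility, available exactly because $q\equiv1\pmod 3$, that makes $\xi\eta$ (and hence $uvw$) a cube. The remaining ingredients — the preliminary reductions, the cases $i=1,2$, and the descent from $(uvw)^{q-1}$ to $uvw$ — are routine cyclic-group bookkeeping.
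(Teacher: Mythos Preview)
Your proof is correct. At its core it hinges on the same ``unexpected factorization'' as the paper: with $\xi=z^{q-1}$, the condition $N(1+\xi)=-1$ is the paper's $(V+1)^{q^2+q+1}=-1$, and your conclusion $\operatorname{Tr}(z)\cdot\operatorname{Tr}(z^{q+1})=0$ is (via $\operatorname{Tr}(z^{q+1})=N(z)\operatorname{Tr}(z^{-1})$) exactly the paper's $T(z)\cdot T(1/z)=0$. Your two branches $\operatorname{Tr}(z)=0$ and $\operatorname{Tr}(z^{-1})=0$ then match the paper's cases $T(v)=0$ and $T(1/v)=0$, and your cube exponents $(q-1)(q+2)$ and $(q-1)^2$ correspond to the paper's observation that $uvw$ equals $cv^{q+2}$ or $cv^{1-q}$. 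Likewise, your $i=2$ argument (Frobenius inverts $\mu_{q+1}$, so $\xi+\eta=-1$ forces $\xi\eta=1$ and hence $\xi^2+\xi+1=0$) is a compact rephrasing of the paper's direct expansion of $(v^{q-1}+1)^{q+1}=1$.

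Where you differ is in packaging and in the \emph{derivation} of the factorization. The paper first proves the full point classifications (Theorems~\ref{3} and~\ref{2}) by brute-force expansion and only then reads off the corollary; you aim straight at the cube claim, work intrinsically in the norm-one subgroup $\mu$, and obtain the factorization from the symmetric-function identity $(a+b)(b+c)(c+a)=(a+b+c)(ab+bc+ca)-abc$ applied to the Galois orbit $(z,z^q,z^{q^2})$---a more conceptual explanation of \emph{why} the factorization exists at all. Your ``key reduction'' also cleanly isolates the role of the condition $3\mid(q^i-1)/(q-1)$ in descending from $(uvw)^{q-1}$ to $uvw$, which the paper handles ad~hoc. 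The paper's route yields the explicit parametrization of all points as a byproduct; yours reaches the corollary with less overhead and never needs to name the points.
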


Although the shape of this assertion is suggestive, we do not know any generalizations of this result.  Nonetheless,
we highlight this result in case it might inspire further developments.  In the recent paper \cite{ScherrZ},
this result was shown for even $q$ when $i=3$, in order to prove
a conjecture from \cite{SZ} about certain functions related to finite projective planes.  That proof was quite
computational, and the desire for a more conceptual proof led to the present paper.

We now describe the points on these Fermat curves.  Write  $T(X) := X^{q^2} + X^q + X$.

\begin{thm} \label{3}
The points in\/ $\P^2(\F_{q^3})$ on the curve $u^{q-1} + v^{q-1} + w^{q-1} = 0$ are as follows:
\begin{enumerate}
\item points $(c v^{q+1} : v : 1)$ with $c\in\F_q^*$ and $v$ a nonzero root of $T(X)$;
\item points $(c v^{-q} : v : 1)$ with $c\in\F_q^*$ and $v$ a nonzero root of $T(1/X)$;
\item points $(u : v : w)$ with $u,v,w\in\F_q$ and precisely one of $u,v,w$ being zero, where $q$ is even.
\end{enumerate}
\end{thm}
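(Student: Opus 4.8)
The plan is to split the points into those with a zero coordinate and those with all three coordinates nonzero, and dispatch the first group by a short order‑of‑subgroup argument. A point of $\P^2(\F_{q^3})$ on the curve cannot have two coordinates equal to $0$, since then the third vanishes too. If exactly one coordinate is $0$, then by the symmetry of the curve in $u,v,w$ we may take $w=0$, so $(u/v)^{q-1}=-1$; the $(q-1)$st powers in $\F_{q^3}^*$ form the subgroup of order $q^2+q+1$, which has odd order and so contains $-1$ only when $-1=1$, i.e.\ only when $q$ is even. When $q$ is even, $(u/v)^{q-1}=1$ forces $u/v\in\F_q^*$, and running over the three choices of which coordinate vanishes produces exactly the points in~(3); conversely each of those lies on the curve since $1+1+0=0$ in characteristic~$2$.

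For the remaining points I would normalize $w=1$, so $u,v\in\F_{q^3}^*$ with $u^{q-1}=-1-v^{q-1}$. Let $N$ be the norm from $\F_{q^3}$ to $\F_q$, so $N(x)=x^{1+q+q^2}$. Since $N(u^{q-1})=u^{q^3-1}=1$, the equation forces $N(-1-v^{q-1})=1$; this is the crux of the argument. Writing $-1-v^{q-1}=-(v+v^q)/v$ and using the elementary identity $(a+b)(b+c)(c+a)=(a+b+c)(ab+bc+ca)-abc$ with $(a,b,c)=(v,v^q,v^{q^2})$, one computes
\[
N(-1-v^{q-1}) = N(-1)\bigl(T(v)\,T(1/v)-1\bigr),
\]
where $T(v)=v^{q^2}+v^q+v$ is the trace of $v$ down to $\F_q$, and similarly for $T(1/v)$. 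Since $N(-1)=(-1)^{1+q+q^2}\in\{\pm1\}$ while $1+q+q^2$ is odd, imposing $N(-1-v^{q-1})=1$ gives $T(v)\,T(1/v)=1+N(-1)=0$, whether $q$ is even or odd. Hence $T(v)=0$ or $T(1/v)=0$.

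It remains to match the two alternatives with~(1) and~(2). If $T(v)=0$ then $v^{q^2}=-v-v^q$, so $-1-v^{q-1}=v^{q^2-1}=(v^{q+1})^{q-1}$; comparing with $u^{q-1}=-1-v^{q-1}$ gives $(u/v^{q+1})^{q-1}=1$, whence $u=cv^{q+1}$ for some $c\in\F_q^*$ and the point is $(cv^{q+1}:v:1)$ with $v$ a nonzero root of $T$ — this is~(1). If instead $T(1/v)=0$, divide the curve equation by $v^{q-1}$ to get $(u/v)^{q-1}=-1-(1/v)^{q-1}$ and apply the same identity with $1/v$ in place of $v$; this yields $u/v=c(1/v)^{q+1}$, i.e.\ $u=cv^{-q}$ with $c\in\F_q^*$, so the point is $(cv^{-q}:v:1)$ with $v$ a nonzero root of $T(1/X)$ — this is~(2). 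For the reverse inclusions one checks directly that every point in~(1) or~(2) lies on the curve: for~(1), $(cv^{q+1})^{q-1}+v^{q-1}+1=v^{q^2-1}+v^{q-1}+1=T(v)/v=0$, and~(2) is the same after multiplying through by $v^{q^2-q}$ and invoking $T(1/v)=0$.

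The only genuinely nontrivial step is the displayed norm identity: one must pass from the curve equation to the single condition $N(-1-v^{q-1})=1$ and then recognize, via the symmetric‑function identity, that this factors through $T(v)\,T(1/v)$, forcing $T(v)=0$ or $T(1/v)=0$. Everything after that — producing the explicit parametrizations $(cv^{q+1}:v:1)$ and $(cv^{-q}:v:1)$, keeping track of the projective normalizations, and handling the characteristic‑$2$ dichotomy for the boundary points — is routine bookkeeping. (Note also that the two families may overlap, exactly when $T(v)=T(1/v)=0$, so no disjointness is being claimed.)
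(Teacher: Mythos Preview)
Your proof is correct and follows essentially the same route as the paper's: both reduce to the norm condition $N_{\F_{q^3}/\F_q}(-1-v^{q-1})=1$ (the paper writes this as $(V+1)^{q^2+q+1}=-1$ with $V=v^{q-1}$) and then factor the resulting expression as $T(v)\,T(1/v)=0$. The one presentational difference is that you obtain the factorization via the symmetric-function identity $(a+b)(b+c)(c+a)=(a+b+c)(ab+bc+ca)-abc$ applied to the Galois conjugates of $v$, whereas the paper expands $(V+1)(V^q+1)(V^{q^2}+1)$ directly and substitutes $V^{q^2}=V^{-q-1}$ before factoring by hand; your version makes the appearance of the trace and ``inverse trace'' a bit more transparent, but the underlying identity is the same.
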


\begin{thm} \label{2}
The points in\/ $\P^2(\F_{q^2})$ on the curve $u^{q-1} + v^{q-1} + w^{q-1} = 0$ are as follows:
\begin{enumerate}
\item points $(c v^2 : v : 1)$ with $c\in\F_q^*$ and $v^{q-1}$ a primitive cube root of unity, where $q\equiv 2\pmod{3}$;
\item points $(u : v : 1)$ with $u,v\in\F_q^*$, where $q\equiv 0\pmod{3}$;
\item points $(u : v : w)$ with $\{u,v,w\}=\{0,1,d\}$ where $d^{q-1}=-1$.
\end{enumerate}
\end{thm}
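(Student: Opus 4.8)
The plan is to reduce the whole problem to a statement about $(q+1)$th roots of unity. Let $\mu\subset\F_{q^2}^*$ be the group of $(q+1)$th roots of unity. The one fact I would use repeatedly is that $x\mapsto x^{q-1}$ is a surjective homomorphism $\F_{q^2}^*\to\mu$ with kernel $\F_q^*$, and that every $\zeta\in\mu$ satisfies $\zeta^q=\zeta^{-1}$ (because $\zeta^{q+1}=1$). Adopting the convention $0^{q-1}=0$, a point $(u:v:w)\in\P^2(\F_{q^2})$ lies on the curve exactly when the nonzero elements among $u^{q-1},v^{q-1},w^{q-1}$ all lie in $\mu$ and sum to $0$. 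I would then organize the proof by the number of vanishing coordinates.

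If two of $u,v,w$ vanish, the equation forces the third to vanish too, which is excluded in $\P^2$. Suppose exactly one coordinate vanishes, say $w=0$ (the remaining cases are symmetric). Then $u,v\in\F_{q^2}^*$ and the equation reads $(u/v)^{q-1}=-1$. Since $-1\in\mu$ always, such $d:=u/v$ exists, and conversely every $d$ with $d^{q-1}=-1$ produces a point; scaling one coordinate to $1$, these are precisely the points whose coordinates are $0,1,d$ in some order. This yields case (iii).

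Now suppose no coordinate vanishes; normalize $w=1$ and put $\alpha=u^{q-1}$, $\beta=v^{q-1}\in\mu$, so $\alpha+\beta+1=0$. The crux is a short computation: writing $\beta=-(1+\alpha)$ and imposing $\beta^{q+1}=1$, and using $\alpha^q=\alpha^{-1}$, gives $(1+\alpha)(1+\alpha^{-1})=1$, hence $\alpha+\alpha^{-1}=-1$, i.e. $\alpha^2+\alpha+1=0$. If $p\neq3$, this means $\alpha$ is a primitive cube root of unity, which lies in $\mu$ if and only if $3\mid q+1$, i.e. $q\equiv2\pmod3$; in that case $\beta=\alpha^2$ and $\alpha=\beta^2$, so $u^{q-1}=(v^2)^{q-1}$, forcing $u=cv^2$ with $c\in\F_q^*$ and $v^{q-1}=\beta$ a primitive cube root of unity, which is case (i); and when $q\equiv1\pmod3$ there is no such $\alpha$, so no points with all coordinates nonzero. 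If $p=3$, then $\alpha^2+\alpha+1=(\alpha-1)^2$, forcing $\alpha=1$ and then $\beta=-2=1$, so $u,v\in\F_q^*$, and conversely $1+1+1=0$ in characteristic $3$; this is case (ii). Combining the cases proves the theorem, and the converse directions (that each listed family lies on the curve) are immediate from the same identities.

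The proof is a clean case analysis with no single deep step; the part demanding the most care is the passage from $\beta^{q+1}=1$ to $\alpha^2+\alpha+1=0$, together with tracking which residue of $q$ modulo $3$, and the characteristic-$3$ degeneration, makes the resulting root representable in the form $u^{q-1}$.
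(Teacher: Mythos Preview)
Your proof is correct and follows essentially the same approach as the paper: both reduce to showing that if $u,v\in\F_{q^2}^*$ satisfy $u^{q-1}+v^{q-1}+1=0$ then $v^{q-1}$ (equivalently $u^{q-1}$) satisfies $X^2+X+1=0$, and both obtain this by exploiting that $(q-1)$th powers of elements of $\F_{q^2}^*$ are $(q+1)$th roots of unity and expanding the resulting norm condition. Your explicit framing via the surjection $\F_{q^2}^*\to\mu$ and the relation $\zeta^q=\zeta^{-1}$ is a slightly cleaner packaging of exactly the computation the paper does directly with $(-u^{q-1})^{q+1}=(v^{q-1}+1)^{q+1}=1$.
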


The description of the points over $\F_q$ is an easy exercise, and we record the answer just for completeness:

\begin{lemma} \label{lemma}
The points in\/ $\P^2(\F_q)$ on the curve $u^{q-1} + v^{q-1} + w^{q-1} = 0$ are as follows:
\begin{enumerate}
\item points $(u : v : w)$ with $u,v,w\in\F_q^*$, where $p=3$; and
\item points $(u : v : w)$ with $u,v,w\in\F_q$ and exactly one of $u,v,w$ being zero, where $p=2$.
\end{enumerate}
\end{lemma}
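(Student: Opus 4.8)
The plan is to use the fact that the $(q-1)$-st power map collapses $\F_q$ onto $\{0,1\}$: since $\F_q^*$ is a group of order $q-1$, we have $a^{q-1}=1$ for every $a\in\F_q^*$ and of course $0^{q-1}=0$. Hence, for a point $(u:v:w)\in\P^2(\F_q)$, the left-hand side $u^{q-1}+v^{q-1}+w^{q-1}$ equals, as an element of $\F_q$, the integer $n$ reduced modulo $p$, where $n\in\{1,2,3\}$ is the number of nonzero coordinates among $u,v,w$. Here $n\ge 1$, since a projective point cannot have all coordinates zero. Thus $(u:v:w)$ lies on the curve precisely when $p\mid n$.

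It then remains to run through the three possibilities for $n$. If $n=1$, then $p\mid 1$ is impossible, so no point with exactly one nonzero coordinate lies on the curve. If $n=2$, then $p\mid 2$ forces $p=2$, giving case (2) of the statement (exactly one coordinate zero). If $n=3$, then $p\mid 3$ forces $p=3$, giving case (1) (all coordinates nonzero). In particular there are no $\F_q$-points at all when $p\ge 5$, and for $p\in\{2,3\}$ the listed families are visibly solutions, since in each case $n$ reduces to $0$ modulo $p$; conversely every solution arises this way, so the two cases exhaust all points.

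There is no genuinely hard step here: the argument is a one-line observation about the image of $x\mapsto x^{q-1}$ on $\F_q$, followed by a divisibility check modulo $p$. The only thing to be careful about is the projective bookkeeping -- remembering that $n\ge 1$ and that ``exactly one coordinate zero'' is the same as ``exactly two coordinates nonzero'' -- but this is immediate. (The same collapsing principle will presumably be the starting point for Theorems \ref{3} and \ref{2}, where the real subtlety instead comes from allowing coordinates in a proper extension of $\F_q$, so that $a^{q-1}$ need no longer be $0$ or $1$.)
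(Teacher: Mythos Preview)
Your argument is correct and is exactly the ``easy exercise'' the paper alludes to; the paper does not actually supply a proof of this lemma, merely recording the answer for completeness. Your reduction via $a^{q-1}\in\{0,1\}$ for $a\in\F_q$ and the ensuing divisibility check $p\mid n$ with $n\in\{1,2,3\}$ is the intended one-line verification.
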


These results yield formulas for the numbers of $\F_{q^i}$-rational points on these curves, which agree with the
corresponding formulas in \cite{Moisio}.  Our proofs are much shorter and more direct than those in \cite{Moisio}, but it should
be noted that
the lengthier proofs in \cite{Moisio} also yielded results about certain twists of these Fermat curves, which we do not
consider here.

Our technique also yields a description of the $\F_{q^2}$-rational points on the degree-$(q-1)$ Fermat surface:

\begin{thm} \label{surface}
The points in\/ $\P^3(\F_{q^2})$ on the surface $u^{q-1}+v^{q-1}+w^{q-1}+x^{q-1}=0$ are as follows:
\begin{enumerate}
\item points $(u_1 : u_2 : u_3 : u_4)$ with $u_i\in\F_{q^2}$, where at least one $u_i$ is nonzero and there is a permutation $\sigma$ of $\{1,2,3,4\}$ such
that $u_{\sigma(1)}^{q-1}=-u_{\sigma(2)}^{q-1}$ and $u_{\sigma(3)}^{q-1}=-u_{\sigma(4)}^{q-1}$;
\item points $(u_1 : u_2 : u_3 : u_4)$ in which exactly one $u_i$ is zero and the $(q-1)$-th powers of the other $u_i$'s are distinct cube roots of
unity, where $q\equiv 2\pmod{3}$;
\item points $(u_1 : u_2 : u_3 : u_4)$ with $u_i\in\F_q$ and exactly one $u_i=0$, where $q\equiv 0\pmod{3}$.
\end{enumerate}
\end{thm}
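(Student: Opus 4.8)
The plan is to translate the problem into the arithmetic of $(q-1)$-th powers in $\F_{q^2}^*$. The crucial observations are that $t\mapsto t^{q-1}$ maps $\F_{q^2}^*$ onto the group $\mu_{q+1}$ of $(q+1)$-th roots of unity (a cyclic group of order $q+1$ contained in $\F_{q^2}$), and that every $z\in\mu_{q+1}$ satisfies $z^q=z^{-1}$. Accordingly, suppose first that $(u_1:u_2:u_3:u_4)$ is a point of $\P^3(\F_{q^2})$ on the surface with all $u_i$ nonzero, and put $a_i:=u_i^{q-1}\in\mu_{q+1}$. Then $a_1+a_2+a_3+a_4=0$, and applying the Frobenius $z\mapsto z^q$ (which fixes $0\in\F_q$) together with $a_i^q=a_i^{-1}$ yields $a_1^{-1}+a_2^{-1}+a_3^{-1}+a_4^{-1}=0$; clearing denominators, the third elementary symmetric function $e_3(a_1,a_2,a_3,a_4)$ vanishes as well.

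I would then use this to establish part (1) for points with no vanishing coordinate. Since $e_1=e_3=0$, we have $\prod_i(X-a_i)=X^4+e_2X^2+e_4$, an even polynomial, so the multiset $\{a_1,a_2,a_3,a_4\}$ is stable under $x\mapsto -x$. A short case check shows that any such four-element multiset of nonzero elements can be split into two pairs of the form $\{s,-s\}$ and $\{t,-t\}$, which is exactly the permutation condition in part (1); in characteristic $2$ one argues instead that $X^4+e_2X^2+e_4$ is the square of a quadratic, so the $a_i$ occur with even multiplicities and the required pairing is again immediate. Conversely, any point satisfying the condition of part (1) lies on the surface, since the four terms $u_i^{q-1}$ break into two pairs each summing to zero. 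Points with two or more zero coordinates are disposed of directly: a point with three zero coordinates is impossible, and for a point with exactly two zero coordinates the equation reduces to $u_i^{q-1}=-u_j^{q-1}$ for the two nonzero coordinates, which again is an instance of part (1).

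It remains to treat a point with exactly one zero coordinate, say $u_4=0$. Then $(u_1:u_2:u_3)$ is a point of $\P^2(\F_{q^2})$ on the curve $u^{q-1}+v^{q-1}+w^{q-1}=0$ with all coordinates nonzero, so Theorem~\ref{2} applies. Its case (1) exhibits a representative of $(u_1:u_2:u_3)$ whose three $(q-1)$-th powers are the three distinct cube roots of unity (when $q\equiv2\pmod3$), giving part (2) here; its case (2) forces $(u_1:u_2:u_3)\in\P^2(\F_q)$ (when $q\equiv0\pmod3$), giving part (3); and its case (3) is excluded, since it involves a zero among $u_1,u_2,u_3$ whereas by hypothesis only $u_4$ vanishes. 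The converses for parts (2) and (3) are the identities $1+\omega+\omega^2=0$ for a primitive cube root of unity $\omega$ and $1+1+1=0$ in characteristic $3$.

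The step I expect to demand the most care is the elementary-but-fiddly passage from ``the multiset $\{a_1,a_2,a_3,a_4\}\subseteq\mu_{q+1}$ is stable under negation'' to the precise pairing asserted in part (1), including the characteristic-$2$ degeneration where that stability is vacuous and one must instead invoke the repeated-root structure of $X^4+e_2X^2+e_4$. Everything else is a direct symmetric-function manipulation or a citation of Theorem~\ref{2}.
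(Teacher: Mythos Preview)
Your proof is correct and takes a genuinely different route from the paper's. The paper singles out one coordinate (say $x$), computes $1=(-x^{q-1})^{q+1}=(u^{q-1}+v^{q-1}+w^{q-1})^{q+1}$, expands using $a^q=a^{-1}$ for $a\in\mu_{q+1}$, and after clearing denominators arrives at the ``unexpected factorization'' $(u^{q-1}+v^{q-1})(u^{q-1}+w^{q-1})(v^{q-1}+w^{q-1})=0$, which immediately forces two of the first three $(q-1)$-th powers to be negatives of one another. Your argument instead treats all four variables symmetrically: applying Frobenius to $\sum a_i=0$ gives $\sum a_i^{-1}=0$, whence $e_1=e_3=0$ and $\prod_i(X-a_i)$ is an even polynomial, so the $a_i$ pair off into $\pm$-pairs (with the squared-quadratic observation handling characteristic~$2$). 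The two approaches are secretly the same identity---when $e_1(a,b,c,d)=0$ one has $e_3(a,b,c,d)=-(a+b)(a+c)(b+c)$---but your packaging explains \emph{why} the paper's factorization appears and avoids the asymmetric bookkeeping. The paper's version, on the other hand, is marginally more elementary (no symmetric functions, just expand and spot the factorization) and reaches the pairing in a single step without a separate characteristic-$2$ argument. The treatment of points with a vanishing coordinate via Theorem~\ref{2} is the same in both proofs.
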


The points in (1) above are the points which are on the lines contained in the surface.

This theorem has the following consequences:

\begin{cor} \label{surfaceprod}
If $u,v,w,x\in\F_{q^2}$ satisfy $u^{q-1}+v^{q-1}+w^{q-1}+x^{q-1}=0$, then $uvwx$ is a square in $\F_{q^2}$.
\end{cor}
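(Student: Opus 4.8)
\textbf{Proof proposal for Corollary~\ref{surfaceprod}.}
The plan is to read off the statement from the classification in Theorem~\ref{surface}, after first discarding all the easy cases. If $uvwx=0$, then $uvwx=0^2$ is a square and there is nothing to prove; this already settles cases~(2) and~(3) of Theorem~\ref{surface} (in which exactly one coordinate vanishes), every point of case~(1) with a zero coordinate, and the degenerate tuple $(0,0,0,0)$. So it suffices to handle a point $(u:v:w:x)\in\P^3(\F_{q^2})$ on the surface all of whose coordinates are nonzero; by Theorem~\ref{surface} such a point must fall under case~(1).

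In that case, after permuting the coordinates I may assume $u^{q-1}=-v^{q-1}$ and $w^{q-1}=-x^{q-1}$. Since every coordinate is nonzero I can form $t:=u/v$ and $s:=w/x$, and then $t^{q-1}=-1=s^{q-1}$. Writing $u=tv$ and $w=sx$ gives
\[
uvwx \;=\; ts\,(vx)^2 ,
\]
so it remains only to show that $ts$ is a square in $\F_{q^2}$. But $(ts)^{q-1}=t^{q-1}s^{q-1}=1$, so $ts\in\F_q^*$, and the whole corollary thereby reduces to the elementary statement that every element of $\F_q$ is a square in $\F_{q^2}$.

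That statement I would prove in one line: for $a\in\F_q$ the polynomial $X^2-a$ either splits over $\F_q$, in which case $a$ is already a square in $\F_q\subseteq\F_{q^2}$, or is irreducible over $\F_q$, in which case a root of it lies in the (unique) quadratic extension $\F_{q^2}$ of $\F_q$, so again $a$ is a square in $\F_{q^2}$; applying this to $a=ts$ finishes the proof. (For $q$ even the corollary is vacuous, since $x\mapsto x^2$ is the Frobenius automorphism of $\F_{q^2}$ and hence every element is a square; the content is entirely in the odd case.) I do not anticipate a genuine obstacle here, as Theorem~\ref{surface} supplies all the structure: the only points to be careful about are that case~(1) with no vanishing coordinate really does force the ``cancel in pairs'' shape $u^{q-1}=-v^{q-1}$, $w^{q-1}=-x^{q-1}$, and the bookkeeping identity $uvwx=ts\,(vx)^2$.
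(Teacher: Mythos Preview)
Your proof is correct and follows essentially the same route as the paper: reduce via Theorem~\ref{surface} to the case where, after a permutation, $u^{q-1}=-v^{q-1}$ and $w^{q-1}=-x^{q-1}$, and then observe that $uvwx$ differs from the square $(vx)^2$ by an element of $\F_q^*$, which is automatically a square in $\F_{q^2}$. The paper compresses this last step into the single line ``$(uvwx)^{q-1}=(vx)^{2q-2}$,'' which encodes exactly your computation $(ts)^{q-1}=1$; your version spells out the reasoning more fully but is otherwise identical.
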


\begin{cor} \label{surfacecount}
The number $N$ of points in\/ $\P^3(\F_{q^2})$ on the surface $u^{q-1}+v^{q-1}+w^{q-1}+x^{q-1}=0$ satisfies
\[
N =
\begin{cases}
3(q-1)^4 + 3(q-1)^3 + 6(q-1) & \text{ if $q\equiv 1\pmod{6}$} \\
3(q-1)^4 + 3(q-1)^3 + 8(q-1)^2 + 6(q-1) & \text{ if $q\equiv 5\pmod{6}$} \\
3(q-1)^4 + 3(q-1)^3 + 4(q-1)^2 + 6(q-1) & \text{ if $q\equiv 0\pmod{3}$} \\
(3q+1)(q-1)^3 + 6(q-1) & \text{ if $q\equiv 4\pmod{6}$} \\
(3q+1)(q-1)^3 + 8(q-1)^2 + 6(q-1) & \text{ if $q\equiv 2\pmod{6}$.}
\end{cases}
\]
\end{cor}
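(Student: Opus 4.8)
The plan is to simply count the points in each of the three families described in Theorem~\ref{surface}, taking care of the overlaps. First I would count the points in family (1): these correspond to partitioning $\{1,2,3,4\}$ into two pairs $\{i,j\}$ and $\{k,l\}$ with $u_i^{q-1}=-u_j^{q-1}$ and $u_k^{q-1}=-u_l^{q-1}$. There are three such partitions. For a fixed partition, to count projective points I would work affinely: the locus $u_i^{q-1}=-u_j^{q-1}$ with $u_i,u_j$ not both zero. Note that since $-1=d^{q-1}$ for some $d\in\F_{q^2}^*$ (as $q-1\mid q^2-1$ and $(q^2-1)/(q-1)=q+1$ is the order, $-1$ is a $(q-1)$-th power iff $2\mid q+1$, i.e. always when... ) — more carefully, $-1$ is a $(q-1)$-th power in $\F_{q^2}^*$ precisely when $q$ is odd, and when $q$ is even $-1=1$ is trivially a $(q-1)$-th power; either way $-1$ is always a $(q-1)$-th power in $\F_{q^2}^*$. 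So $u_i^{q-1}=-u_j^{q-1}$ has solutions with $u_j\ne 0$: choose $u_j\in\F_{q^2}^*$ freely ($q^2-1$ choices), then $u_i/u_j$ ranges over the $(q-1)$-st roots of $-1$, of which there are $q-1$; this also covers the case $u_i=0$. This gives $(q^2-1)(q-1)$ affine points with $u_j\ne0$, i.e. $(q+1)(q-1)$ projective points on the line $u_i^{q-1}=-u_j^{q-1}$ in $\P^1$. Hence each partition contributes $(q-1)^2(q+1)^2$ points to $\P^3$, but I must then apply inclusion–exclusion over the three partitions.

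The overlap computation is where the main bookkeeping lies. A point lies in the intersection of the loci for two different partitions exactly when $u_1^{q-1},u_2^{q-1},u_3^{q-1},u_4^{q-1}$ satisfy two independent pairing relations; this forces (after scaling) the four $(q-1)$-th powers to be, up to sign, all equal — concretely the point lies in the locus for all three partitions, and these are the points with $\{u_i^{q-1}\}=\{t,t,-t,-t\}$ in some order for a single value of $t$ (including $t=0$, the fully degenerate points). So I would compute: the number of points common to all three partitions, and subtract/add via the standard $|A\cup B\cup C| = \sum|A| - \sum|A\cap B| + |A\cap B\cap C|$, using that all pairwise intersections equal the triple intersection here. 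Counting the triple intersection is itself a small finite-field count: points $(u_1:u_2:u_3:u_4)$ with all $u_i^{q-1}=\pm t$; when $t=0$ all $u_i=0$, impossible in $\P^3$; when $t\ne0$ each $u_i\ne0$ and $u_i^{q-1}\in\{t,-t\}$ with an even number of each sign, giving $\binom{4}{2}=6$ sign patterns, $q-1$ choices of each coordinate up to the common scaling. I expect the triple-intersection count to come out to something like $3(q-1)^3+\text{(lower order)}$, and combining everything should yield the leading term $3(q-1)^4+3(q-1)^3$ uniformly.

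Next I would count families (2) and (3) and check they are disjoint from family (1) and from each other. Family (3) (when $3\mid q$): exactly one coordinate zero, the other three in $\F_q$ — but we need $u^{q-1}+v^{q-1}+w^{q-1}=0$ with the zero coordinate contributing nothing, i.e. three nonzero elements of $\F_q$ with $1+1+1=0$, which holds since $p=3$; so the count is $4$ (choices of the zero slot) times $(q-1)^3$ affine triples modulo the common $\F_q^*$-scaling inside $\P^3$... here I must be careful that the projective scaling is by $\F_{q^2}^*$, not $\F_q^*$, so the count is $4\cdot(q^2-1)(q-1)^2/(q^2-1)$ after fixing one nonzero coordinate — I would redo this cleanly, landing on $4(q-1)^2$, consistent with the ``$+4(q-1)^2$'' in the $q\equiv0\pmod3$ case. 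Family (2) (when $q\equiv2\pmod3$): exactly one coordinate zero, the other three having $(q-1)$-th powers equal to the three distinct cube roots of unity; here I would count ordered choices ($4$ slots for the zero, $3!$ orderings of the cube roots among the remaining slots, $q-1$ choices of each nonzero coordinate given its $(q-1)$-th power, divided by the scaling $q^2-1$... ) arriving at $8(q-1)^2$, matching the stated formula. Finally I would verify family (2) and family (3) do not meet family (1): a point in family (1) has at most no constraint forcing a zero coordinate pattern incompatible with ``exactly one zero,'' so I would just check that a point with exactly one coordinate zero lies in family (1) only if two of the three nonzero $(q-1)$-th powers sum to zero, whereas in family (2) the three nonzero values are the three cube roots of unity (no two of which sum to zero when $q\equiv 2\bmod 3$ forces primitive cube roots to not be $-1$), and in family (3) they are all $1$ (and $1+1\ne 0$ since $p=3$). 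The main obstacle is getting every scaling factor and overlap exactly right so that the five congruence cases assemble into precisely the stated piecewise formula; the underlying counts are each elementary.
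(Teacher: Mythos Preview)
Your overall strategy---deduce the count directly from the classification in Theorem~\ref{surface} by tallying each family and handling overlaps---is exactly what the paper does; the paper simply calls this deduction ``straightforward.''  However, two of your specific assertions are wrong and would need to be repaired before the count goes through.

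First, the claim that any pairwise intersection $A_{\pi}\cap A_{\pi'}$ already equals the triple intersection is false when $q$ is odd.  With $a_i:=u_i^{q-1}$, the conditions for $\pi_1=\{\{1,2\},\{3,4\}\}$ and $\pi_2=\{\{1,3\},\{2,4\}\}$ force $a_1=a_4=t$ and $a_2=a_3=-t$; for the third partition $\pi_3=\{\{1,4\},\{2,3\}\}$ one would need $a_1=-a_4$, i.e.\ $2t=0$, which fails for $t\ne 0$ in odd characteristic.  So for odd $q$ one has $|A_{\pi}\cap A_{\pi'}|=(q-1)^3$ while the triple intersection is empty; for even $q$ both equal $(q-1)^3$.

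Second, your count $|A_\pi|=(q-1)^2(q+1)^2$ is off: you divided the affine pair-count $(q^2-1)(q-1)$ by $q-1$ rather than by $q^2-1$ to pass to $\P^1$, and you dropped the possibility that one of the two paired blocks is $(0,0)$.  The correct value is $|A_\pi|=(q+1)(q-1)^3+2(q-1)$.  With these two fixes the inclusion--exclusion gives
\[
|A_{\pi_1}\cup A_{\pi_2}\cup A_{\pi_3}| \;=\;
\begin{cases}
3(q-1)^4+3(q-1)^3+6(q-1) & q\text{ odd},\\
(3q+1)(q-1)^3+6(q-1) & q\text{ even},
\end{cases}
\]
and adding your (correct) counts $8(q-1)^2$ or $4(q-1)^2$ for families (2) and (3) recovers the stated formula.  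The paper's implicit bookkeeping organizes the count by the number of zero coordinates and by the multiplicity pattern of the $(q-1)$-th powers rather than by inclusion--exclusion over the three partitions, which sidesteps this overlap issue; your route works too once the two slips above are corrected.
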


Our proof of Theorem~\ref{3} relies on the unexpected factorization (\ref{unexp}).  A similar type of unexpected factorization
arose in a paper of Carlitz \cite[eqn.\ (25)]{Carlitz} on multiple Kloosterman sums.  Carlitz's result was then used in \cite{Moisio},
together with a few pages of additional character sum computations, to count the number of $\F_{q^3}$-rational points on
the degree-$(q-1)$ Fermat curve.  It is tempting to surmise that there should
be a natural bijection which shows that our factorization and Carlitz's are in some sense the same factorization in
different languages.  However, we have not been able to find such a bijection.  Likewise, our proof of Theorem~\ref{surface} relies on the unexpected factorization (\ref{unexp2}).

We will prove Theorems~\ref{2} and \ref{3} in the next two sections, and prove Corollary~\ref{cor} in Section 5.
We conclude in Section 6 with proofs of Theorem~\ref{surface} and its corollaries.


\section{Points over $\F_{q^2}$}

In this section we prove Theorem~\ref{2}, and then verify that the resulting formula for the number of $\F_{q^2}$-rational points
agrees with the previously known value.

\begin{proof}[Proof of Theorem~\ref{2}]
Suppose $u,v\in\F_{q^2}^*$ satisfy $u^{q-1} + v^{q-1} + 1 = 0$.  Then
\begin{align*}
1 &= (-u^{q-1})^{q+1} \\
&= (v^{q-1}+1)^{q+1} \\
&= (v^{q-1}+1)^q (v^{q-1}+1) \\
&= (v^{q^2-q}+1)(v^{q-1}+1) \\
&= v^{q^2-1} + v^{q^2-q} + v^{q-1} + 1 \\
&= 1 + v^{1-q} + v^{q-1} + 1,
\end{align*}
or equivalently,
\[
0 = v^{2q-2} + v^{q-1} + 1.
\]
Thus, if $p\ne 3$ then $v^{q-1}$ is a primitive cube root of unity, and if
$p=3$ then $v^{q-1}=1$.

Conversely, if $p\ne 3$ then $\F_{q^2}$ contains two primitive cube roots of unity.
In order that these cube roots of unity should be $(q-1)$-th powers, it is necessary
and sufficient that $3(q-1)\mid (q^2-1)$, or equivalently, $q\equiv 2\pmod{3}$.
If $q\equiv 2\pmod{3}$ then there are precisely $2q-2$ elements $v\in\F_{q^2}^*$ such that
$v^{q-1}$ is a primitive cube root of unity, and for any such $v$ the equation $u^{q-1}+v^{q-1}+1=0$
can be rewritten as $u^{q-1} = v^{2q-2}$, or equivalently, $u=cv^2$ with $c\in\F_q^*$.
Likewise, if $p=3$ and $v\in\F_q^*$ then the equation $u^{q-1}+v^{q-1}+1=0$ becomes $u^{q-1}=1$, so $u\in\F_q^*$.
Finally, it is straightforward to solve $u^{q-1}+v^{q-1}+w^{q-1}=0$ when $uvw=0$.  This completes the proof.
\end{proof}

The following consequence of Theorem~\ref{2} is immediate.

\begin{cor} \label{2cor}
The number $N$ of points in\/ $\P^2(\F_{q^2})$ on the curve $u^{q-1}+v^{q-1}+w^{q-1}=0$ is
\[
N = \begin{cases}
3(q-1) & \text{ if $q\equiv 1\pmod{3}$} \\
3(q-1)+(q-1)^2 & \text{ if $q\equiv 0\pmod{3}$} \\
3(q-1)+2(q-1)^2 & \text{ if $q\equiv 2\pmod{3}$.}
\end{cases}
\]
\end{cor}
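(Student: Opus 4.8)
The statement is an immediate bookkeeping consequence of Theorem~\ref{2}: one simply counts the projective points listed there, grouped by the three cases. The plan is to first observe that the three families are pairwise disjoint — the points in cases (1) and (2) have all three coordinates nonzero, while those in case (3) have exactly one zero coordinate — so $N$ is the sum of the three subcounts, with case (1) contributing only when $q\equiv 2\pmod 3$ and case (2) only when $q\equiv 0\pmod 3$.

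For case (3), I would count as follows. A point with $\{u,v,w\}=\{0,1,d\}$ and $d^{q-1}=-1$ has exactly one zero coordinate (two zero coordinates would force the third to vanish), and for each of the $3$ choices of which coordinate is zero we normalize one of the nonzero coordinates to $1$, so such points correspond bijectively to elements $d\in\F_{q^2}^*$ with $d^{q-1}=-1$, together with the choice of position of the zero. The map $x\mapsto x^{q-1}$ on the cyclic group $\F_{q^2}^*$ is $(q-1)$-to-$1$ onto the subgroup of order $q+1$, and $-1$ lies in that subgroup (it equals $1$ when $q$ is even, and is the unique element of order $2$, hence in every even-order subgroup, when $q$ is odd), so there are exactly $q-1$ admissible $d$. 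Thus case (3) contributes $3(q-1)$ points, regardless of $q$.

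For case (2), with $q\equiv 0\pmod 3$, the points $(u:v:1)$ with $u,v\in\F_q^*$ are already in normalized form, so distinct pairs $(u,v)$ give distinct points: $(q-1)^2$ of them. For case (1), with $q\equiv 2\pmod 3$, the points $(cv^2:v:1)$ are likewise normalized, and from $v$ and $cv^2$ one recovers $(c,v)$, so distinct pairs $(c,v)$ give distinct points; since $3\mid q+1$, the order-$(q+1)$ subgroup of $(q-1)$-th powers contains both primitive cube roots of unity, giving $2(q-1)$ admissible $v$ and $q-1$ admissible $c$, hence $2(q-1)^2$ points. Assembling: $N=3(q-1)$ when $q\equiv 1\pmod3$; $N=(q-1)^2+3(q-1)$ when $q\equiv 0\pmod3$; and $N=2(q-1)^2+3(q-1)$ when $q\equiv 2\pmod3$, as claimed.

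There is no real obstacle here — the proof is routine given Theorem~\ref{2}. The only two points requiring a moment's care are checking that each listed parametrization gives each projective point exactly once (handled by the normalization of the last coordinate), and the short cyclic-group computation identifying when $-1$ and the primitive cube roots of unity are $(q-1)$-th powers in $\F_{q^2}^*$; both are one-line arguments.
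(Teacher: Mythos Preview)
Your proposal is correct and is exactly the routine bookkeeping the paper has in mind: the paper gives no proof beyond the sentence ``The following consequence of Theorem~\ref{2} is immediate,'' and your case-by-case count from the parametrizations in Theorem~\ref{2} is precisely how one makes that immediacy explicit.
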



\section{Points over $\F_{q^3}$}

In this section we prove Theorem~\ref{3}, and then verify that the resulting formula for the number of $\F_{q^3}$-rational points
agrees with the previously known value.

\begin{proof}[Proof of Theorem~\ref{3}]
Suppose  $u\in\F_{q^3}^*$ and $v\in\F_{q^3}$  satisfy  $u^{q-1} + v^{q-1} + 1 = 0$.  Then
\[
-1 = (-u^{q-1})^{q^2+q+1} = (v^{q-1} + 1)^{q^2+q+1}.
\]
If $v=0$ then we get $-1=1$, so $q$ is even and $u^{q-1}=1$.  Henceforth assume $v\ne 0$.
Write  $V := v^{q-1}$, so that
\begin{align*}
-1 &= (V+1)^{q^2}(V+1)^q(V+1) \\
   &= (V^{q^2}+1)(V^q+1)(V+1) \\
   &= V^{q^2+q+1} + V^{q^2+q} + V^{q^2+1} + V^{q^2} + V^{q+1} + V^q + V + 1.
\end{align*}
Upon substituting  $V^{q^2} = 1/V^{q+1}$,  we obtain
\[
-1 = 1 + V^{-1} + V^{-q} + V^{-q-1} + V^{q+1} + V^q + V + 1.
\]
Adding $1$ to both sides yields
\begin{equation} \label{unexp}
0 = (V^{q+1} + V + 1) \cdot (V^{-q-1} + V^{-1} + 1).
\end{equation}
This is the ``unexpected factorization" discussed at the end of Section~1.
Since $V = v^{q-1}$, this last equation says
\[
0 = (v^{q^2-1} + v^{q-1} + 1)\cdot ( (1/v)^{q^2-1} + (1/v)^{q-1} + 1),\]
or equivalently,
\[
0 = T(v) \cdot T(1/v).
\]

Conversely, if $T(v)=0$ and $v\ne 0$ then $v\in\F_{q^3}$, since
\[
0 = 0^q - 0 = T(v)^q - T(v) = v^{q^3} - v.
\]
In this case, $v^{q-1} + 1 = -v^{q^2-1}$, so for $u\in\F_{q^3}^*$ the condition $u^{q-1}+v^{q-1}+1=0$
says that $u^{q-1} = v^{q^2-1}$, or equivalently, $u=cv^{q+1}$ for some $c\in\F_q^*$.
Likewise, if $v\ne 0$ and $T(1/v)=0$ then $v\in\F_{q^3}$ and the condition $u^{q-1}+v^{q-1}+1=0$ becomes
$(u/v)^{q-1} + 1 + (1/v)^{q-1}=0$, or equivalently $(u/v)^{q-1} = (1/v)^{q^2-1}$.  This last equation says
that $u/v = c/v^{q+1}$ for some $c\in\F_q^*$, so that $u=c/v^q$.
Finally, if $q$ is even and $u\in\F_q^*$ then plainly $u^{q-1}+1=0$.  This completes the proof.
\end{proof}

\begin{cor} \label{3cor}
The number $N$  of points in\/ $\P^2(\F_{q^3})$ on the curve $u^{q-1}+v^{q-1}+w^{q-1}=0$ is
\[
N = \begin{cases}
(2q+2)(q-1)^2 & \text{ if $q\equiv 5\pmod{6}$} \\
(2q+1)(q-1)^2 & \text{ if $q\equiv 0\pmod{3}$} \\
2q(q-1)^2 & \text{ if $q\equiv 1\pmod{6}$} \\
(2q+2)(q-1)^2+3(q-1) & \text{ if $q\equiv 2\pmod{6}$} \\
2q(q-1)^2 + 3(q-1) & \text{ if $q\equiv 4\pmod{6}$.}
\end{cases}
\]
\end{cor}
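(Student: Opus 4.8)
The plan is to read off the three families of points from Theorem~\ref{3} --- call them $F_1$, $F_2$, $F_3$ --- to count each, and then to subtract the points that lie in more than one family. First I would record the relevant facts about $T(X)=X^{q^2}+X^q+X$: its derivative is the constant $1$, so $T$ is squarefree and has $q^2$ distinct roots, and these all lie in $\F_{q^3}$ by the identity $T(v)^q-T(v)=v^{q^3}-v$ used in the proof of Theorem~\ref{3}; thus $T$ has exactly $q^2-1$ nonzero roots, and likewise there are exactly $q^2-1$ nonzero $v\in\F_{q^3}$ with $T(1/v)=0$. In $F_1$ the assignment $(c,v)\mapsto(cv^{q+1}:v:1)$ is injective (the last coordinate is normalized, so the point determines $v$, and then $c$), whence $|F_1|=(q^2-1)(q-1)$; the same argument gives $|F_2|=(q^2-1)(q-1)$; and $|F_3|=3(q-1)$ when $q$ is even --- three choices of which coordinate is $0$, then $q-1$ choices for the ratio of the remaining two --- while $F_3$ is empty when $q$ is odd.

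Next I would handle the overlaps. Every point of $F_1$ and of $F_2$ has all coordinates nonzero, so each of $F_1,F_2$ is disjoint from $F_3$. Two points $(c_1v_1^{q+1}:v_1:1)$ and $(c_2v_2^{-q}:v_2:1)$ are equal only if $v_1=v_2$, so a point lies in $F_1\cap F_2$ precisely when its middle coordinate $v$ is a common nonzero root of $T(X)$ and $T(1/X)$. Writing $V:=v^{q-1}$, the relations $T(v)=0$ and $T(1/v)=0$ become $V^{q+1}+V+1=0$ and $V^{q+1}+V^q+1=0$; subtracting shows $V\in\F_q$, after which both relations reduce to $V^2+V+1=0$. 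So the admissible $v$ are those for which $v^{q-1}\in\F_q$ is a root of $X^2+X+1$: there are two possible values of $v^{q-1}$ if $q\equiv1\pmod3$, one (namely $1$) if $q\equiv0\pmod3$, and none if $q\equiv2\pmod3$; each such value is a $(q-1)$-th power in $\F_{q^3}^*$, lying in the subgroup of order $q^2+q+1$, so since $v\mapsto v^{q-1}$ is $(q-1)$-to-one on $\F_{q^3}^*$ there are $2(q-1)$, $q-1$, or $0$ admissible $v$ respectively. For any admissible $v$ one has $v^3\in\F_q^*$, because $(v^3)^{q-1}=(v^{q-1})^3=1$, and hence $v^{2q+1}=v^3(v^{q-1})^2\in\F_q^*$, so that $v^{q+1}\F_q^*=v^{-q}\F_q^*$; this means the $q-1$ points of $F_1$ with middle coordinate $v$ constitute exactly the same set as the $q-1$ points of $F_2$ with middle coordinate $v$. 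Therefore $|F_1\cap F_2|$ equals $2(q-1)^2$, $(q-1)^2$, or $0$ in the three cases.

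Finally I would assemble $N=|F_1|+|F_2|+|F_3|-|F_1\cap F_2|=2(q^2-1)(q-1)+|F_3|-|F_1\cap F_2|$ and split by the residue of $q$ modulo $6$, using that a prime power is never $\equiv0\pmod6$; the five resulting expressions simplify to those in the statement. As a consistency check one verifies that the $p=2$ and $p=3$ formulas are compatible with Lemma~\ref{lemma}, whose $\F_q$-points must reappear among $F_1,F_2,F_3$, and that all five agree with the counts in \cite{Moisio}. The one delicate point is the overlap bookkeeping in the middle paragraph: the crux is that $v^{2q+1}\in\F_q^*$ for admissible $v$, which forces the two one-parameter families through a common middle coordinate to coincide as sets rather than merely to meet, and it is exactly this coincidence that produces the $2(q-1)^2$ (resp. $(q-1)^2$) correction term.
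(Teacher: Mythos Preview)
Your proposal is correct and follows essentially the same approach as the paper: count each of the three families in Theorem~\ref{3} and subtract the overlap $F_1\cap F_2$, using that $T$ is squarefree with $q^2-1$ nonzero roots and that for any common root the two cosets $v^{q+1}\F_q^*$ and $v^{-q}\F_q^*$ coincide. The only difference is in how the common roots of $T(X)$ and $T(1/X)$ are determined: the paper deduces $v^{(q-1)^2}=1$ and computes $\gcd((q-1)^2,q^3-1)$ to get $3(q-1)$ or $q-1$ such $v$ according as $q\equiv 1\pmod 3$ or not, whereas you substitute $V=v^{q-1}$, show $V\in\F_q$ satisfies $V^2+V+1=0$, and count preimages---a slightly cleaner route to the same case split.
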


\begin{proof}
If $q$ is odd then all points have nonzero coordinates; if $q$ is even then there are precisely
$3(q-1)$ points with a zero coordinate.  Henceforth we consider points with nonzero coordinates.
Since the derivative $T'(X)$ is a nonzero constant, we know that $T(X)$ is squarefree, and hence
has $q^2-1$ distinct nonzero roots.  If none of these are roots of $T(1/X)$ then the number
of $\F_{q^3}$-rational points on our curve which have nonzero coordinates will equal
$2(q^2-1)(q-1)$, or in other words $(2q+2)(q-1)^2$.  Next, if $T(v)=T(1/v)=0$ then
\[
v^{q^2}+v^q+v=0=v^{q^2+1}T(1/v)= v+v^{q^2-q+1}+v^{q^2},
\]
so $v^q=v^{q^2-q+1}$ and thus $v^{q^2-2q+1}=1$.  Since $v\in\F_{q^3}$, it follows
that $v^{\gcd(q^2-2q+1,q^3-1)}=1$.  We compute $q^3-1=(q^2-2q+1)(q+2)+3q-3$,
so $\gcd(q^2-2q+1,q^3-1)$ equals $3q-3$ if $q\equiv 1\pmod{3}$, and equals $q-1$ otherwise.
Conversely, if $v\in\F_q^*$ then $T(v)=3v$ is nonzero unless $q\equiv 0\pmod{3}$,
in which case $T(v)=T(1/v)=0$ and
\[
\{(c v^{q+1} : v : 1) \colon c\in\F_q^*\} = \{ (c v^{-q} : v : 1) \colon c\in\F_q^*\}.
\]
Finally, if $q\equiv 1\pmod{3}$ and $\omega:=v^{q-1}$ has order $3$, then
$T(v)=v(\omega^{q+1}+\omega+1)=0$, and likewise $T(1/v)=0$.  In this case,
$v^3$ is in $\F_q^*$, so also $v^{2q+1}\in\F_q^*$, which implies that again
\[
\{(c v^{q+1} : v : 1) \colon c\in\F_q^*\} = \{ (c v^{-q} : v : 1) \colon c\in\F_q^*\}.
\]
The result now follows from Theorem~\ref{3}.
\end{proof}


\section{The product of the coordinates}

We now prove Corollary~\ref{cor}.  The conclusion clearly holds if $uvw=0$, so assume that $uvw\ne 0$.
Without loss, we may divide each of $u,v,w$ by $w$, in order to assume that $w=1$.
The conclusion is immediate if every element of $\F_{q^i}$ is a cube, which occurs when either $p=3$
or both $i=3$ and $q\equiv 2\pmod{3}$.  Henceforth assume that neither of these situations holds.
This rules out all solutions if $i=1$.
If $i=2$ then $q\equiv 2\pmod{3}$ and $u=cv^2$ with $c\in\F_q^*$, so $uv=cv^3$ is a cube since $c$ is a $(q+1)$-th power.
If $i=3$ then $q\equiv 1\pmod{3}$ and, for some $c\in\F_q^*$, either $u=c v^{q+1}$ or $u=c v^{-q}$.
Thus $uvw$ equals either $c v^{q+2}$ or $c v^{1-q}$.  Since $q\equiv 1\pmod{3}$, both $v^{q+2}$ and $v^{1-q}$
are cubes in $\F_{q^3}$.  Finally, since $c$ is in $\F_q^*$, it is a $(q^2+q+1)$-th power in $\F_{q^3}$, and
hence is a cube.  The result follows.

\begin{remark}
Since the statement of Corollary~\ref{cor} is quite clean, it is natural to wonder whether there is a generalization
to higher-degree extensions of $\F_q$.  However, the most immediate generalization is not true.  For instance,
there are elements $u,v,w\in\F_{16}$ for which $u+v+w=0$ but $uvw$ generates the group $\F_{16}^*$.
This shows that the immediate generalization of Corollary~\ref{cor} does not hold for $i=4$ and $q=2$.
\end{remark}


\section{The Fermat surface}

In this section we prove Theorem~\ref{surface} and its corollaries.

\begin{proof}[Proof of Theorem~\ref{surface}]
Let $u,v,w,x\in\F_{q^2}$ satisfy $u^{q-1}+v^{q-1}+w^{q-1}+x^{q-1}=0$.
The desired conclusion follows from Theorem~\ref{2} if $uvwx=0$, so assume that $u,v,w,x\in\F_{q^2}^*$.  Then
\begin{align*}
1 &= (-x^{q-1})^{q+1} \\
&= (u^{q-1}+v^{q-1}+w^{q-1})^{q+1} \\
&= (u^{q-1}+v^{q-1}+w^{q-1})^q \cdot (u^{q-1}+v^{q-1}+w^{q-1}) \\
&= (u^{q^2-q}+v^{q^2-q}+w^{q^2-q}) \cdot (u^{q-1}+v^{q-1}+w^{q-1}) \\
&= (u^{1-q}+v^{1-q}+w^{1-q}) \cdot (u^{q-1}+v^{q-1}+w^{q-1}) \\
&= 3 + (u/v)^{q-1}+(v/u)^{q-1}+(u/w)^{q-1}+(w/u)^{q-1}+(v/w)^{q-1}+(w/v)^{q-1}.
\end{align*}
After subtracting $1$ from both sides, and then multiplying both sides by $(uvw)^{q-1}$, we obtain
\begin{equation} \label{unexp2}
0=(u^{q-1}+v^{q-1})(u^{q-1}+w^{q-1})(v^{q-1}+w^{q-1}).
\end{equation}
Hence two of $u^{q-1}$, $v^{q-1}$, and $w^{q-1}$ are negatives of one another.  The result follows.
\end{proof}

Corollary~\ref{surfaceprod} follows from Theorem~\ref{surface} because, if $u,v,w,x\in\F_{q^2}^*$ satisfy
$u^{q-1}=-v^{q-1}$ and $w^{q-1}=-x^{q-1}$, then $(uvwx)^{q-1}=(vx)^{2q-2}$ is a square.
Finally, the deduction of Corollary~\ref{surfacecount} from Theorem~\ref{surface} is straightforward.

\end{document}